\def\cal{\mathcal}
\newcommand{\field}[1]{\mathbb{#1}}
\newcommand{\C}{\field{C}}
\newcommand{\N}{\field{N}}
\newcommand{\Q}{\field{Q}}
\newcommand{\Z}{\field{Z}}
\newcommand{\krn}{{\rm ker}\,}
\newtheorem{theorem}{Theorem}[section]
\newtheorem{proposition}{Proposition}[section]
\newtheorem{lemma}{Lemma}[section]
\newtheorem{corollary}{Corollary}[section]
\newtheorem{definition}{Definition}[section]
\newtheorem{example}{Example}[section]
\begin{document}

\makeatletter	   
\makeatother     

\title{An affine version of a theorem of Nagata}
\author{Gene Freudenburg}
\date{\today} 
\subjclass[2010]{13B25, 14R10} 
\keywords{L\"uroth theorem, cancellation problem, locally nilpotent derivation}
\maketitle

\pagestyle{headings}

 \begin{abstract} Let $R$ be an affine $k$-domain over the field $k$. The paper's main result is that, if $R$ admits a non-trivial embedding in a polynomial ring 
 $K[s]$ for some field $K$ containing $k$, then $R$ can be embedded in a polynomial ring $F[t]$ which extends $R$ algebraically. This theorem can be applied to subrings of a ring which admits a non-zero locally nilpotent derivation.  In this way, we obtain a concise new proof of the cancellation theorem for rings of transcendence degree one for fields of characteristic zero.
\end{abstract} 
 
\section{Introduction} If $F\subset E$ are fields and $x\in E$, the subfield of $E$ generated by $F$ and $x$ is denoted $F(x)$. If $x$ is transcendental over $F$, then $F(x)$ is isomorphic to the field of rational functions in one variable over $F$, and we write $F(x)\cong F^{(1)}$.
In his 1967 paper \cite{Nagata.67}, Nagata proved the following fundamental result for fields.
\begin{theorem}{\rm (\cite{Nagata.67}, Thm. 2 and \cite{Ohm.88}, Thm. 5.2)}  Let $k, K,L$ be fields such that:
\begin{itemize}
\item [(i)] $k\subset K$ and $k\subset L\subset K^{(1)}$
\item [(ii)] $K$ is finitely generated over $k$
\item [(iii)] $L\not\subset K$
\end{itemize}
Then there exists a finite algebraic extension of the form $L\subset M^{(1)}$ for some field $M$ containing $k$.
\end{theorem}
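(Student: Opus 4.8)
The plan is to translate the statement into geometry and build the ruled structure on a model of $L$ by cutting down a $\mathbb{P}^1$-bundle. Choose a projective model $W$ of $K$ over $k$, so that $\dim W=d$, where $d$ is the transcendence degree of $K$ over $k$, and set $\mathcal{C}=\mathbb{P}^1_W$ with projection $\pi\colon\mathcal{C}\to W$; then $k(\mathcal{C})=K(x)\cong K^{(1)}$ and $\dim\mathcal{C}=d+1$. Let $X$ be a model of $L$ over $k$, with $e=\dim X$ the transcendence degree of $L$ over $k$, and let $\phi\colon\mathcal{C}\dashrightarrow X$ be the dominant rational map coming from the embedding $L\subseteq K^{(1)}$. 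The first step is the observation that hypothesis (iii), namely $L\not\subseteq K$, is equivalent to the statement that $\phi$ does not factor rationally through $\pi$: a factoring map $W\dashrightarrow X$ would induce $L=k(X)\hookrightarrow k(W)=K$. Consequently, for $w$ in a dense open subset of $W$ the restriction of $\phi$ to the fibre $\pi^{-1}(w)$ --- a $\mathbb{P}^1$ over the residue field $\kappa(w)$ --- is non-constant, with image a rational curve $R_w\subseteq X$.

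Now let $Z\subseteq W\times X$ be the closure of the image of $(\pi,\phi)$. Non-constancy of $\phi$ on the general fibre of $\pi$ makes $\mathcal{C}\to Z$ generically finite, so $\dim Z=d+1$; the projection $Z\to X$ is dominant with general fibre of dimension $d-e+1$, while $Z\to W$ has the curves $R_w$ as its general fibres. Let $W'\subseteq W$ be a general complete intersection of $d-e+1$ very ample divisors, so $\dim W'=e-1$, and put $Z'=Z\cap(W'\times X)$, which then has dimension $e$. The crucial point is that $Z'$ still dominates $X$: a general fibre of $Z\to X$ is a subvariety of $W$ of dimension $d-e+1$, the codimension of $W'$, and a complete intersection of that many very ample divisors meets every closed subvariety of $W$ of dimension at least $d-e+1$, by positivity of intersection numbers. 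Since $\dim Z'=e=\dim X$, the dominant map $Z'\to X$ is generically finite, so $L=k(X)\hookrightarrow k(Z')$ is a finite algebraic extension. On the other hand $Z'\to W'$ has the rational curves $R_w$, $w\in W'$, as general fibres, so its generic fibre is a rational curve over $k(W')$, and L\"uroth's theorem identifies $k(Z')$ with $k(W')^{(1)}$. Hence $M=k(W')\supseteq k$ is the required field, with $L\subseteq M^{(1)}$ a finite algebraic extension.

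The step I expect to be the main obstacle is the slicing --- guaranteeing that $Z'$ still dominates $X$ after the base has been cut down to dimension $e-1$, together with the bookkeeping to do this over the given ground field (one may freely enlarge $k$ by a purely transcendental extension, so assuming $k$ infinite costs nothing). I would also note that the classical proofs of Nagata and Ohm avoid geometry, working instead through valuation theory --- in effect, Ohm's ruled residue theorem applied to the Gaussian extension of a suitably chosen place of $K$ --- and that the argument sketched here is a geometric counterpart of that approach.
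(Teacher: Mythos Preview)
The paper does not contain a proof of this theorem; it is quoted from Nagata and Ohm as motivation, and the paper's own contribution is the affine analogue (Theorem~\ref{main}). So there is nothing in the paper to compare your argument against. As you yourself note in your last paragraph, the classical proofs proceed through valuation theory---essentially the ruled residue theorem---rather than geometry.

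Your geometric outline is reasonable and should go through, and the points you flag are indeed where the care is needed. A few specifics worth adding. First, to invoke L\"uroth at the end you need $k(Z')\subset k(W')(x)$, i.e., that $Z'$ is dominated by $\mathbb{P}^1_{W'}$; this holds once $Z'$ is irreducible of dimension $e$, since it then coincides with the closure of the image of $\mathbb{P}^1_{W'}$ under $(\pi,\phi)$, but it deserves a sentence. Second, irreducibility of $Z'$ via Bertini requires an infinite base field, and when $e=1$ the slice $W'$ is zero-dimensional and will typically not be irreducible at all; in that case replace $Z'$ by an irreducible component dominating $X$, which is the image of $\mathbb{P}^1$ over the residue field of a single closed point of $W'$. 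Third, your intersection-number argument for dominance of $Z'\to X$ uses that the general fibre $F_x\subset W$ of $Z\to X$ is \emph{projective} of dimension $d-e+1$; projectivity follows because $Z$ is closed in $W\times X$, but say so.
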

This result extends the famous theorem of L\"uroth, which asserts that, if $k\subset L\subset k(x)$ are fields with $k\ne L$ and $x$ transcendental over $k$, then there exists $y\in k(x)$ with $L=k(y)$. By combining the theorems of L\"uroth and Nagata, we get an even stronger statement for fields of transcendence degree one over $k$; see the {\it Appendix}. 

We consider an analogous situation for integral domains. The polynomial ring in one variable $x$ over the field $F$ is denoted $F[x]=F^{[1]}$. For the integral domain $R$, we seek criteria to determine when $R=F^{[1]}$; when $R\subset F^{[1]}$ and $R\not\subset F$; or when $R\subset F^{[1]}$ with $F^{[1]}$ algebraic over $R$. 
Our main result is {\it Theorem~\ref{main}}, which may be regarded as an affine version of Nagata's theorem:
\begin{quote}{\it
Let $k$ be a field, and let $R$ be an affine $k$-algebra.
Suppose that there exists a field $K$ with
$R\subset K^{[1]}$ and $R\not\subset K$. Then there exists a field $F$ and an algebraic extension $R\subset F^{[1]}$. }
\end{quote}
This result is of particular interest in the setting of locally nilpotent derivations, where we assume that the ground field $k$ is of characteristic zero. If an integral $k$-domain $B$ admits a non-zero locally nilpotent derivation $D$, then $B$ is contained in $K[s]$, where $K$ is the field of fractions of the kernel of $D$ and $s$ is a local slice. 
Thus, any affine subalgebra $R\subset B$ not contained in the Makar-Limanov invariant of $B$ is isomorphic to a non-trivial subring of $F[s]$ for some field $F$, where $F[s]$ is algebraic over $R$. 

For rings of transcendence degree one over $k$,  {\it Thm.~\ref{main2}} gives an even stronger conclusion.
\begin{quote}{\it 
Let $k$ be a field, and let $R$ be a $k$-algebra with ${\rm tr.deg}_kR=1$. 
Suppose that there exists a field $K$ with
$R\subset K^{[1]}$ and $R\not\subset K$. Then $R$ is $k$-affine and there exists a field $F$ algebraic over $k$ with $R\subset F^{[1]}$. 
If $k$ is algebraically closed, then there exists $t\in {\rm frac}(R)$ with $R\subset k[t]$.}
\end{quote}
In \cite{Abhyankar.Eakin.Heinzer.72}, Abhyankar, Eakin, and Heinzer proved that if $R,S$ are integral domains of transcendence degree one over a field $k$ such that the polynomial rings $R[x_1,...,x_n]$ and $S[y_1,...,y_n]$ are isomorphic $k$-algebras, then $R$ and $S$ are isomorphic. 
In {\it Section \ref{cancel}}, we apply {\it Thm.~\ref{main2}}, together with the well-known theorems of Seidenberg and Vasconcelos on derivations, to obtain a short proof of this result in the case $k$ is of characteristic zero. Makar-Limanov gave a proof of this result 
for $k=\C$ in \cite{Makar-Limanov.98}, and we follow his idea to use the Makar-Limanov invariant. Other proofs are given in \cite{Crachiola.04, Crachiola.Makar-Limanov.05} for the case $k$ is algebraically closed. 

\subsection{Background} L\"uroth's Theorem was proved by L\"uroth for the field $k=\C$ in 1876, and for all fields by Steinitz in 1910 \cite{Luroth.1876, Steinitz.1910}. 
One generalization states that, if $k\subset L\subset k(x_1,...,x_n)$ and $L$ is of transcendence degree one over $k$, then $L=k(y)$. This was proved by Gordan for $k=\C$ in 1887, and for all fields by Igusa in 1951 \cite{Gordan.1887,Igusa.51}; other proofs appear in \cite{Nagata.67, Samuel.53}.
In 1894, Castelnuovo showed that, if $\C\subset L\subset\C (x_1,...,x_n)$ and $L$ is of transcendence degree two over $\C$, then $L=\C (y_1,y_2)$ \cite{Castelnuovo.1894}. Castelnuovo's result does not extend to non-algebraically closed ground fields, or to fields $L$ of higher transcendence degree. An excellent account of ruled fields and their variants can be found in \cite{Ohm.88}, including the theorem of Nagata (Thm. 5.2 of \cite{Ohm.88}). 

For polynomial rings, Evyatar and Zaks \cite{Evyatar.Zaks.70} showed that if $R$ is a PID and $k\subset R\subset k[x_1,...,x_n]$, then $R=k^{[1]}$; in \cite{Zaks.71}, Zaks generalized this to the case $R$ is a Dedekind domain.
And in \cite{Abhyankar.Eakin.Heinzer.72}, (2.5), Abhyankar, Eakin, and Heinzer showed that, if $k\subset R\subset k[x_1,...,x_n]$ and $R$ is of transcendence degree one over $k$, then $R$ is isomorphic to a subring of $k^{[1]}$. 

The Makar-Limanov invariant of a commutative ring (defined below) was introduced by Makar-Limanov in the mid-1990s, and he called it the ring of {\it absolute constants}. It is an important invariant in the study of affine rings, affine varieties, and their automorphisms. 

\subsection {Preliminaries} 
If $B$ is an integral domain, then ${\rm frac}(B)$ is the quotient field of $B$, and $B^{[n]}$ is the polynomial ring in $n$ variables over $B$. Given $f\in B$, $B_f$ denotes the localization $B[1/f]$. 
The set of derivations $D:B\to B$ is ${\rm Der}(B)$. 

If $A\subset B$ is a subring, the transcendence degree of $B$ over $A$, 
denoted ${\rm tr.deg}_AB$, will mean the transcendence degree of ${\rm frac}(B)$ over ${\rm frac}(A)$. The set of elements of $B$ algebraic over $A$ is denoted by ${\rm Alg}_AB$. 

Let $k$ be a field of characteristic zero, and $B$ an integral domain containing $k$. The set of $k$-derivations $D:B\to B$ is denoted ${\rm Der}_k(B)$. $D$ is said to be {\it locally nilpotent} if, to each $b\in B$, there exists $n\in\N$ with $D^nb=0$. The set of locally nilpotent derivations of $B$ is denoted ${\rm LND}(B)$. 
If $D\in {\rm LND}(B)$ is non-zero, and $A$ is the kernel of $D$, then $A$ is algebraically closed in $B$ and ${\rm tr.deg}_A(B)=1$. 

The {\it Makar-Limanov invariant} of $B$ is the intersection of all kernels of locally nilpotent derivations of $B$, denoted $ML(B)$. Note that $ML(B)$ is a $k$-algebra which is algebraically closed in $B$, and that any automorphism of $B$ maps $ML(B)$ into itself. 

An element $s\in B$ is a {\it local slice} of $D$ if $D^2s=0$ and $Ds\ne 0$. Note that every non-zero element of ${\rm LND}(B)$ admits a local slice. If $s\in B$ is a local slice of $D$, then: 
\begin{equation}\label{cylinder}
B_{Ds}=A_{Ds}[s]=(A_{Ds})^{[1]} \quad {\rm and }\quad D=d/ds
\end{equation}
This implies the following property: If $Df\in fB$ for some $f\in B$, then $Df=0$. 
The reader is referred to \cite{Freudenburg.06} for further details regarding locally nilpotent derivations. 

We also need the following.

\begin{proposition}\label{Nowicki} {\rm (\cite{Nowicki.94a}, Prop. 5.1.2)} Let $k$ be a field and $A$ a commutative $k$-algebra. Then, for any field extension $L/k$, 
$A$ is finitely generated over $k$ if and only if $L\otimes_kA$ is finitely generated over $L$. 
\end{proposition}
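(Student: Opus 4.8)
The plan is to prove the two implications separately: the forward one is immediate, and the converse rests on a faithfully-flat descent argument.

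For the forward implication, suppose $A=k[a_1,\dots,a_n]$. Then $L\otimes_kA$ is generated as an $L$-algebra by the finitely many elements $1\otimes a_1,\dots,1\otimes a_n$, so it is finitely generated over $L$. This requires no hypothesis on $L$ beyond its being a $k$-algebra.

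For the converse, suppose $L\otimes_kA=L[\xi_1,\dots,\xi_n]$ for some $\xi_i\in L\otimes_kA$. Write each $\xi_i=\sum_j\ell_{ij}\otimes a_{ij}$ with $\ell_{ij}\in L$ and $a_{ij}\in A$, and let $A_0\subseteq A$ be the $k$-subalgebra generated by the finitely many elements $a_{ij}$; thus $A_0$ is $k$-affine. Since every module over a field is flat, $L$ is flat over $k$, so the inclusion $A_0\hookrightarrow A$ induces an injection $L\otimes_kA_0\hookrightarrow L\otimes_kA$, and we identify $L\otimes_kA_0$ with its image, an $L$-subalgebra of $L\otimes_kA$. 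This subalgebra is in particular an $L$-submodule containing each $1\otimes a_{ij}$, hence contains each $\xi_i=\sum_j\ell_{ij}(1\otimes a_{ij})$, and therefore contains the $L$-subalgebra $L[\xi_1,\dots,\xi_n]=L\otimes_kA$. Hence $L\otimes_kA_0=L\otimes_kA$.

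It remains to deduce $A_0=A$. Tensoring the short exact sequence $0\to A_0\to A\to A/A_0\to 0$ of $k$-modules with $L$ yields an exact sequence, and since the map $L\otimes_kA_0\to L\otimes_kA$ is an isomorphism we conclude $L\otimes_k(A/A_0)=0$. Because $L\neq 0$ and every $k$-module is free, $L\otimes_kM=0$ forces $M=0$; applying this to $M=A/A_0$ gives $A=A_0$, which is finitely generated over $k$. The only real content is this final descent step — equivalently, the faithful flatness of $L$ over the field $k$; the rest of the argument is formal.
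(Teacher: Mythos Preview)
Your argument is correct. The forward implication is immediate, and for the converse you correctly identify the finitely generated subalgebra $A_0$ containing enough of $A$ to force $L\otimes_kA_0=L\otimes_kA$, then use faithful flatness of $L$ over $k$ (equivalently, the fact that tensoring a nonzero $k$-vector space with a nonzero $k$-vector space yields a nonzero space) to conclude $A_0=A$. Each step is sound.

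As for comparison with the paper: the paper does not prove this proposition at all. It is quoted as Proposition~5.1.2 of Nowicki's monograph and used as a black box in the proof of Theorem~3.1. So there is no ``paper's own proof'' to compare against; you have supplied a self-contained argument where the author chose to cite one. Your proof is the standard faithfully-flat descent argument and is exactly what one would expect to find in the cited reference.
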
 

\section{Main Theorem}

For a field $K$, the polynomial ring $K[s]=K^{[1]}$ is naturally $\Z$-graded over $K$, where $s$ is homogeneous of degree one. 
Let $\deg$ be the associated degree function in $s$ over $K$. A subring $R\subset K[s]$ is {\it homogeneous} if the $\Z$-grading restricts to $R$.

\begin{lemma}\label{homogeneous} Let $k\subset K$ be fields and $K[s]=K^{[1]}$. Suppose that $R\subset K[s]$ is a homogeneous subring with $k\subset R$ and $R\not\subset K$. Let $L={\rm frac}(R)\cap K$, and let $\hat{L},\hat{K}$ denote the algebraic closures of $L$ and $K$, respectively. Then there exists $c\in \hat{K}$ and integer $d\ge 1$ such that $R\subset \hat{L}[cs^d]$ and $\hat{L}[cs^d]$ is algebraic over $R$. 
\end{lemma}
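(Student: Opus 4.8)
The plan is to exploit the $\Z$-grading on $K[s]$ directly. Write $R=\bigoplus_{n\ge 0}R_n$ with $R_n=R\cap Ks^n$, and for each $n$ set $I_n=\{b\in K: bs^n\in R\}$, so that $R_n=I_ns^n$. Put $E={\rm frac}(R)$ (a subfield of $K(s)$, as $R$ is a subring of the domain $K[s]$), so that by hypothesis $L=E\cap K$ and $k\subset I_0\subset L$. Since $R\not\subset K$, the set $S=\{n\ge 1: R_n\ne 0\}$ is non-empty; because $R$ is a domain and $R_mR_n\subset R_{m+n}$, the set $S$ is closed under addition, hence is an additive sub-semigroup of the positive integers. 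Let $d=\gcd(S)$, so $d$ divides every element of $S$. The first observation is that for each $n\in S$ the coefficient space $I_n$ is a rank-one $L$-module: fixing $0\ne a_n\in I_n$, any $b\in I_n$ gives $b/a_n=(bs^n)/(a_ns^n)\in E\cap K=L$, so $I_n\subset La_n$. Thus $R$ is pinned down by one nonzero element in each graded piece, together with $R_0=I_0\subset L$.

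Next I would build the element $cs^d$. Since $d=\gcd(S)$ generates the subgroup $\langle S\rangle=d\Z$, Bézout gives finitely many $n_1,\dots,n_r\in S$ and integers $\alpha_1,\dots,\alpha_r$ with $\sum_i\alpha_in_i=d$. Choosing $0\ne a_{n_i}\in I_{n_i}$ and forming the product $\prod_i(a_{n_i}s^{n_i})^{\alpha_i}$ inside the field $E$ (legitimate, as each factor is a nonzero element of $R\subset E$), we obtain an element of $E$ of the form $cs^d$, where $c=\prod_ia_{n_i}^{\alpha_i}\in K^{*}$. In particular one may take $c\in K\subset\hat K$.

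I then claim $R\subset L[cs^d]\subset\hat L[cs^d]$. For $n\in S$ we have $d\mid n$ and $n/d\ge 1$, and $(cs^d)^{n/d}=c^{n/d}s^n$ is a nonzero element of $E$; hence for any $b\in I_n$ the ratio $bs^n/(cs^d)^{n/d}=b/c^{n/d}$ lies in $E\cap K=L$, so $bs^n=(b/c^{n/d})(cs^d)^{n/d}\in L[cs^d]$. Together with $R_0=I_0\subset L$ this gives $R\subset L[cs^d]$. Since $s$ is transcendental over $\hat K$ and $c\in K^{*}$, the element $cs^d$ is transcendental over $\hat K\supset\hat L$, so $L[cs^d]\cong L^{[1]}$ and $\hat L[cs^d]\cong\hat L^{[1]}$. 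For the algebraicity statement, ${\rm frac}(\hat L[cs^d])=\hat L(cs^d)$; since $cs^d\in E$ while $\hat L$ is algebraic over $L\subset E$, this field is contained in the compositum $\hat L\cdot E$, which is algebraic over $E={\rm frac}(R)$. Hence $\hat L[cs^d]$ is algebraic over $R$, which finishes the argument.

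The step I expect to be the crux is the construction of $cs^d$: when $d\notin S$ (for instance $S=\langle 2,3\rangle$), no single graded piece of $R$ produces such an element, and one must pass to ${\rm frac}(R)$ and take the Bézout combination of generators — after which the rank-one observation for the $I_n$ does all the remaining work. Everything else is bookkeeping with the grading and with field extensions, and the fact that $L=E\cap K$ is used repeatedly to convert "ratios in $K$ coming from $R$" into elements of $L$.
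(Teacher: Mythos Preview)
Your argument is correct, and in fact yields a slightly sharper conclusion than the paper's: you produce $c\in K^{*}$ (not merely $c\in\hat K$) with $cs^d\in{\rm frac}(R)$, and you show $R\subset L[cs^d]$ rather than just $R\subset\hat L[cs^d]$. The difference lies in the construction of $cs^d$. The paper fixes a single homogeneous $r=\kappa s^{de}\in R$, takes $c\in\hat K$ to be an $e$-th root of $\kappa$ so that $r=(cs^d)^e$, and then checks that any other such choice $c'$ satisfies $(c/c')^{ee'}\in L$, hence $c/c'\in\hat L$; this is quicker but forces the passage to $\hat K$ and $\hat L$. You instead observe that each graded piece $I_ns^n$ is rank one over $L$ and use a B\'ezout combination $\prod_i(a_{n_i}s^{n_i})^{\alpha_i}$ inside ${\rm frac}(R)$ to manufacture $cs^d$ directly, avoiding roots altogether. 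Your algebraicity argument (via the compositum $\hat L\cdot{\rm frac}(R)$) is also more explicit than the paper's, which leaves this point essentially unstated, relying on $(cs^d)^e\in R$ and $\hat L$ being algebraic over $L\subset{\rm frac}(R)$.
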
 

\begin{proof} Define the integer: 
\[
d=\gcd\{ \deg r\, |\, r\in R, \, r\ne 0\}
\]
Let homogeneous $r\in R$ of positive degree be given. Then there exist $\kappa\in K$ and positive $e\in\Z$ with $r=\kappa s^{de}$. Let $c\in\hat{K}$ be such that $c^{e}=\kappa$. Then 
$r=(cs^d)^{e}$.

If $\rho\in R$ is any other homogeneous element of positive degree, then $\rho =(c^{\prime}s^d)^{e^{\prime}}$ for $c^{\prime}\in\hat{K}$ and positive $e^{\prime}\in\Z$. We have:
\[
\frac{r^{e^{\prime}}}{\rho^e}=\frac{((cs^d)^e)^{e^{\prime}}}{((c^{\prime}s^d)^{e^{\prime}})^e} = \left(\frac{c}{c^{\prime}}\right)^{ee^{\prime}} \in L
\quad\Rightarrow\quad \frac{c}{c^{\prime}}\in\hat{L}
\quad\Rightarrow\quad \hat{L}[c^{\prime}s^d]=\hat{L}[cs^d]
\]
It follows that $R\subset \hat{L}[cs^d]$. 
\end{proof}

\begin{theorem}\label{main}  Let $k$ be a field, and let $R$ be an affine $k$-algebra.
Suppose that there exists a field $K$ with
$R\subset K^{[1]}$ and $R\not\subset K$. Then there exists a field $F$ and an algebraic extension $R\subset F^{[1]}$. 
\end{theorem}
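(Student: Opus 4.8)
The plan is to reduce the general (non-graded) situation to the homogeneous case handled by \emph{Lemma~\ref{homogeneous}}, and then argue that the field $F$ produced there, together with the generator $cs^d$, yields a polynomial ring algebraic over $R$. The key device is to pass to the associated graded ring. Concretely, write $K[s]=K^{[1]}$ with its standard $\Z$-grading in $s$, and let $\deg$ be the degree function in $s$. For each nonzero $r\in R$ let $\mathrm{gr}(r)$ be its leading form (the top-degree homogeneous component of $r$ as a polynomial in $s$ over $K$), and let $R'\subset K[s]$ be the $K$-subalgebra, actually the subgroup, generated by $\{\mathrm{gr}(r): r\in R,\ r\ne 0\}$. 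I expect $R'$ to be a homogeneous subring of $K[s]$ with $k\subset R'$ and $R'\not\subset K$ (the latter because $R\not\subset K$ forces some $r\in R$ of positive degree in $s$, whence $\mathrm{gr}(r)\notin K$). One must check that $R'$ is genuinely closed under multiplication; this is the standard fact that leading forms multiply, $\mathrm{gr}(r_1)\mathrm{gr}(r_2)=\mathrm{gr}(r_1r_2)$, valid because $K[s]$ is a domain so no cancellation of top terms occurs. That $R'$ is additively the span of the leading forms is a matter of definition.

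Next I would apply \emph{Lemma~\ref{homogeneous}} to $R'$: with $L'=\mathrm{frac}(R')\cap K$ and $\hat L',\hat K$ the algebraic closures of $L'$ and $K$, there are $c\in\hat K$ and $d\ge 1$ with $R'\subset\hat L'[cs^d]$ and $\hat L'[cs^d]$ algebraic over $R'$. In particular every leading form $\mathrm{gr}(r)$ lies in $\hat L'[cs^d]$, so its exponent of $s$ is a multiple of $d$; since this holds for all $r\in R$, every element of $R$ has $s$-degree divisible by $d$, and moreover $\deg r = d\cdot(\text{degree of }\mathrm{gr}(r)\text{ in }cs^d)$. Thus $d=\gcd\{\deg r: r\in R,\ r\ne 0\}$, exactly as in the lemma's proof. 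Setting $u=cs^d$, I claim $R\subset\hat L'[u]$: indeed $\hat K[s]$ is a polynomial ring over $\hat K$, its subring of elements whose $s$-degree is divisible by $d$ is $\hat K[s^d]=\hat K[u]$ (as $c\in\hat K^\times$), and an induction on degree using the leading-form description shows each $r\in R$, lying in $\hat K[u]$ with leading coefficient in $\hat L'$, actually has all its $u$-coefficients in $\hat L'$ — subtract off $\mathrm{gr}(r)\in\hat L'[u]$ and repeat. Finally take $F=\hat L'$ (or, to keep things tighter, the subfield of $\hat L'$ generated over $k$ by the finitely many coefficients appearing in generators of $R$, together with the finitely many $\hat L'$-elements needed to witness algebraicity; since $R$ is $k$-affine this is a finite, hence algebraic, extension — and one could invoke \emph{Proposition~\ref{Nowicki}} if finite generation of an intermediate algebra needs to be promoted). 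Then $R\subset F[u]=F^{[1]}$.

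It remains to see that the extension $R\subset F^{[1]}$ is algebraic, i.e. $\mathrm{tr.deg}_R F[u]=0$, equivalently $\mathrm{tr.deg}_k R = \mathrm{tr.deg}_k F[u] = \mathrm{tr.deg}_k F + 1$. Since $R\not\subset K\supset$ nothing transcendental is lost vertically, I would argue: $\mathrm{frac}(R)$ contains some element of positive $s$-degree, so $\mathrm{frac}(R)(s)=\mathrm{frac}(R)(u)$ has transcendence degree $\mathrm{tr.deg}_k R$ over $k$ and equals a degree-one extension of $\mathrm{frac}(R)\cap K$-flavored data; the lemma already gives $\hat L'[u]$ algebraic over $R'$, and $R'$ is built from leading forms of $R$ so $\mathrm{frac}(R')\subset\mathrm{Alg}$ of $\mathrm{frac}(R)$ inside $\hat K(s)$ — tracking this gives $\hat L'$ algebraic over $\mathrm{frac}(R)\cap K$, hence $F[u]$ algebraic over $R$. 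The main obstacle I anticipate is precisely this transcendence/algebraicity bookkeeping: one must be careful that passing to leading forms does not enlarge transcendence degree and that the field $L'=\mathrm{frac}(R')\cap K$ attached to the graded ring is algebraic over the more natural field $\mathrm{frac}(R)\cap K$ attached to $R$ itself. Everything else — the multiplicativity of leading forms, the gcd computation, the descent of coefficients into $\hat L'$ — is routine once the graded reduction is set up.
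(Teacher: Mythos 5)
Your reduction to the graded situation (forming the algebra of leading forms and applying \emph{Lemma~\ref{homogeneous}}) matches the paper's first step exactly, but the next step of your argument contains a genuine gap: the claimed containment $R\subset\hat L'[u]$ with $u=cs^d$ is simply false in general. The lemma controls only the \emph{leading} coefficients of elements of $R$; the lower-order coefficients can be arbitrary elements of $K$, not algebraic over $L'$, and the lower-order exponents need not be multiples of $d$. A minimal counterexample: $K=k(\alpha)$ with $\alpha$ transcendental over $k$, and $R=k[w]$ with $w=s^2+\alpha s$. Here the leading-form algebra is $k[s^2]$, so $L'=k$, $\hat L'=\hat k$, $c=1$, $d=2$, $u=s^2$ --- yet $w\notin\hat k[s^2]$ (it has an odd-degree term, with coefficient $\alpha$ not even algebraic over $k$). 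Your ``subtract off $\mathrm{gr}(r)$ and repeat'' induction also breaks down for a structural reason: $r-\mathrm{gr}(r)$ is generally \emph{not} an element of $R$, so its leading form is not a leading form of $R$ and the lemma gives you no control over it. Likewise, ``every element of $R$ has $s$-degree divisible by $d$'' refers only to the top degree and does not put $r$ inside $\hat K[s^d]$.

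The paper's proof supplies precisely the missing idea. Instead of trying to force the coefficients of elements of $R$ into $\hat L$, it collects the finitely many coefficients $c_{ij}$ of a generating set of $R$ (this is where affineness of $R$ is used) into a finitely generated $\hat L$-algebra $A\subset\hat K$, sets $B=A[s]$, and \emph{specializes}: it chooses a maximal ideal $\mathfrak m$ of $A$ with $c\notin\mathfrak m$ and passes to $\pi:B\to B/\mathfrak m B=\hat L^{[1]}$. The graded containment $k[\bar R]\subset\hat L[cs^d]$ is used only to prove $R\cap\mathfrak m B=(0)$ (a nonzero element of $R\cap\mathfrak m B$ would have leading coefficient $\lambda c^{\epsilon}$ with $\lambda\in\hat L^{*}$, forcing $c\in\mathfrak m$), so that $\pi|_R$ is an injective, degree-preserving embedding of $R$ into $\hat L^{[1]}$. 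The algebraicity of $\hat L^{[1]}$ over $\pi(R)$ is then established by a separate leading-form argument bounding $\mathrm{tr.deg}_k\,k[\bar R]$ by $\mathrm{tr.deg}_k R$. Your transcendence-degree bookkeeping at the end gestures at the right statement, but without the specialization step there is no embedding of $R$ into any $F^{[1]}$ to which it could apply. To repair your proof you would need to add this quotient-by-a-maximal-ideal argument (or an equivalent device for moving the stray coefficients into an algebraic extension of $L'$ without destroying injectivity on $R$).
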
 

\begin{proof} Suppose that $R\subset K[s]=K^{[1]}$. For each $g\in K[s]$, let $\bar{g}$ denote the highest-degree homogeneous summand of $g$ as a polynomial in $s$. Define the set:
\[
 \bar{R}= \{ \bar{r}\, |\, r\in R\, ,\, r\ne 0\, \}
 \]
Then $k[\bar{R}]$ is a homogeneous subalgebra of $K[s]$ not contained in $K$.

By {\it Lemma~\ref{homogeneous}}, if $L={\rm frac}(k[\bar{R}])\cap K$
and if $\hat{L},\hat{K}$ are the algebraic closures of $L$ and $K$, respectively, then:
\begin{equation}\label{graded}
k[\bar{R}]\subset\hat{L}[cs^d] \quad (c\in \hat{K}\, ,\, d\ge 1)
\end{equation}

By  hypothesis, there exist $w_1,...,w_m\in R$ ($m\ge 1$) such that $R=k[w_1,...,w_m]$. 
Given $i$, assume that $w_i=\sum_{j=0}^{n_i}c_{ij}w^j$, where $c_{ij}\in K$. 
Define $A\subset \hat{K}$ and $B\subset \hat{K}[s]$ by:
\[
A=\hat{L}[c,c_{ij}\, |\, 1\le i\le m\, ,\, 0\le j\le n_i\, ] \quad {\rm and}\quad B=A[s]=A^{[1]}
\]
Then $R\subset B$, $A$ is finitely generated over $\hat{L}$, and the Jacobson radical of $A$ is trivial. 
Choose a maximal ideal $\mathfrak{m}$ of $A$ not containing $c$.

If $R\cap\mathfrak{m}B\ne (0)$, let non-zero $r\in R\cap\mathfrak{m}B$ be given. Since $\mathfrak{m}B=\mathfrak{m}[s]$, 
we have $r=\sum_{0\le i\le e}a_is^i$, where $a_i\in\mathfrak{m}$ for each $i$. Note that $e\ge 1$, since $\hat{L}\cap\mathfrak{m}=(0)$. 
Therefore, by equation (\ref{graded}), there exist $\epsilon\ge 1$ and non-zero $\lambda\in\hat{L}$ such that:
\[
\bar{r}=a_es^e = \lambda (cs^d)^{\epsilon}
\]
But then $c\in\mathfrak{m}$, a contradiction. Therefore, $R\cap\mathfrak{m}B = (0)$. 

Let $\pi :B\to B/\mathfrak{m}B$ be the canonical surjection of $\hat{L}$-algebras, noting that:
\[
B/\mathfrak{m}B = (A/\mathfrak{m}A)[\pi (s)]=\hat{L}^{[1]}
\]
Since $\pi (cs^d)=\pi (c)\pi (s)^d$, where $\pi (c)\ne 0$, we see that $\pi\vert_R$ is a degree-preserving isomorphism. It follows that
$R$ is a subring of $\hat{L}^{[1]}$ via $\pi$.

It remains to show that $R$ and $\hat{L}^{[1]}$ have the same transcendence degree over $k$. Since $R\subset \hat{L}^{[1]}$, it will suffice to show 
${\rm tr.deg}_k\hat{L}^{[1]}\le {\rm tr.deg}_kR$. By {\it Lemma~\ref{homogeneous}}, we see that ${\rm tr.deg}_k\hat{L}^{[1]}={\rm tr.deg}_kk[\bar{R}]$, so it will suffice to show 
${\rm tr.deg}_kk[\bar{R}]\le {\rm tr.deg}_kR$. 

Let $n=\dim_kR$, and let $r_1,...,r_{n+1}\in R$ be given. 
Then there exists a polynomial $h\in k[x_1,...,x_{n+1}]=k^{[n+1]}$ with $h(r_1,...,r_{n+1})=0$. If $k[x_1,...,x_{n+1}]$ is $\Z$-graded in such a way that each $x_i$ is homogeneous and the degree of $x_i$ is $\deg r_i$, then $H(\bar{r}_1,...,\bar{r}_{n+1})=0$, where $H$ is the highest-degree homogeneous summand of $h$. 
We have thus shown that any subset of $n+1$ elements in a generating set for $k[\bar{R}]$ is algebraically dependent over $k$. Therefore, the transcendence degree of $k[\bar{R}]$ over $k$ is at most $n$.

This completes the proof of the theorem.
\end{proof}

\section{Rings of Transcendence Degree One}

\begin{theorem}\label{main2}  Let $k$ be a field, and let $R$ be a $k$-algebra with ${\rm tr.deg}_kR=1$. 
Suppose that there exists a field $K$ with
$R\subset K^{[1]}$ and $R\not\subset K$. Then $R$ is $k$-affine and there exists a field $F$ algebraic over $k$ with $R\subset F^{[1]}$. 
If $k$ is algebraically closed, then there exists $t\in {\rm frac}(R)$ with $R\subset k[t]$.
\end{theorem}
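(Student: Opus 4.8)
The plan is to reduce everything to Theorem~\ref{main}. Granting for a moment that $R$ is $k$-affine, Theorem~\ref{main} yields a field $F$ with $R\subseteq F^{[1]}=F[t]$ and $F[t]$ algebraic over $R$; then $1={\rm tr.deg}_kR={\rm tr.deg}_kF[t]={\rm tr.deg}_kF+1$, so ${\rm tr.deg}_kF=0$ and $F$ is algebraic over $k$. Thus the real content is the affineness of $R$, together with the sharpening when $k=\bar k$.

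To prove $R$ is $k$-affine --- which I expect to be the crux --- I would first pin down the constants. Put $L={\rm frac}(R)\cap K$. If $r\in R$ has positive $s$-degree then $r$ is transcendental over $K$, since in any algebraic dependence relation of $r$ over $K$ the highest-degree term in $s$ cannot be cancelled. Hence ${\rm frac}(R)$ is transcendental over $L$, so ${\rm tr.deg}_kL=0$; moreover, by the same argument together with the fact that $K$ is algebraically closed in $K^{(1)}$, $L$ is precisely the algebraic closure of $k$ in ${\rm frac}(R)$. Next I would run the leading-form construction used in the proof of Theorem~\ref{main}: with $\bar R=\{\bar r : r\in R,\ r\ne0\}$, the ring $k[\bar R]$ is a homogeneous subring of $K[s]$ not contained in $K$, and ${\rm tr.deg}_kk[\bar R]\le{\rm tr.deg}_kR=1$ by the ``leading form of a relation'' argument given there; by Lemma~\ref{homogeneous}, $k[\bar R]\subseteq\hat{L}'[cs^d]\cong(\hat{L}')^{[1]}$ where $L'={\rm frac}(k[\bar R])\cap K$ is algebraic over $k$. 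Now I would invoke the classical fact that a subalgebra of a one-variable polynomial ring over a field is finitely generated: a suitable element $f$ of minimal positive degree makes the ambient ring a finite $k[f]$-module, and a submodule of a finite module over the Noetherian ring $k[f]$ is again finite. This gives that $k[\bar R]$ is $k$-affine; finally, lifting a finite generating set of $k[\bar R]$ to elements of $R$ and inducting on $s$-degree shows these lifts generate $R$. The hard part here is the coefficient-field bookkeeping: one must argue that only scalars from a subfield finite over $k$ are actually needed, so that the ``subalgebra of $K^{[1]}$'' fact may be applied over $k$ rather than over $\hat{L}'$. This is where Proposition~\ref{Nowicki} and the transcendence-degree-one hypothesis enter, and the difficulty disappears entirely when $k=\bar k$, since then $\hat{L}'=k$.

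For the final assertion, assume $k=\bar k$. Then the field $F$ produced above, being algebraic over $k$, equals $k$, so $R\subseteq k[t]=k^{[1]}$ with $k[t]$ finite over $R$. Let $\tilde R$ be the integral closure of $R$ in ${\rm frac}(R)$; by Noether's finiteness theorem $\tilde R$ is again $k$-affine, and being a one-dimensional normal domain it is a Dedekind domain, contained in $k[t]$ since $k[t]$ is integrally closed and $R\subseteq k[t]$. By the theorem of Zaks~\cite{Zaks.71}, a Dedekind domain $A$ with $k\subseteq A\subseteq k^{[n]}$ and $A\ne k$ is $k$-isomorphic to $k^{[1]}$; applied to $\tilde R$ this gives $\tilde R=k[u]$ for some $u\in\tilde R\subseteq{\rm frac}(R)$, whence $R\subseteq\tilde R=k[u]$, which is the desired conclusion with $t:=u$. (Alternatively one can bypass Zaks geometrically: a dominant morphism $\A^1\to{\rm Spec}\,\tilde R$ forces the smooth affine curve ${\rm Spec}\,\tilde R$ to be rational with a single place at infinity, hence isomorphic to $\A^1$.)

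In short, the affineness step, and specifically the coefficient-field bookkeeping inside it, is where I expect essentially all the difficulty to sit; the reduction to Theorem~\ref{main}, the determination of $L$, and the Dedekind-domain endgame are each short.
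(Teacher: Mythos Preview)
Your proposal is correct and follows the paper's overall architecture: reduce the affineness question to the case $k=\bar k$, settle it there, descend via Proposition~\ref{Nowicki}, apply Theorem~\ref{main}, and finish the last claim by passing to the integral closure. The one genuine difference is how affineness is established in the algebraically closed case. The paper argues directly on $R$: the degree semigroup $\Sigma=\{\deg r:0\ne r\in R\}$ is finitely generated, one picks $w_1,\dots,w_m\in R$ whose degrees generate $\Sigma$, sets $S=k[w_1,\dots,w_m]$, and for any $v\in R$ finds $u\in S$ with $\deg u=\deg v$; the algebraic dependence of $u,v$ yields a standard-homogeneous relation $P(\bar u,\bar v)=0$, which over $k=\bar k$ factors into linear forms, giving $\alpha\bar u+\beta\bar v=0$ and hence a strict drop in degree. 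You instead route through the leading-form ring $k[\bar R]$, use Lemma~\ref{homogeneous} (with $\hat L'=k$ since $k=\bar k$) to embed it in $k^{[1]}$, invoke the standard fact that every $k$-subalgebra of $k^{[1]}$ is $k$-affine, and then lift generators back to $R$ by induction on degree. Both arguments work; the paper's is more self-contained (in effect it reproves the ``subalgebras of $k^{[1]}$ are affine'' lemma on the spot and never needs Lemma~\ref{homogeneous} here), while yours recycles machinery already built for Theorem~\ref{main}. For the final assertion the paper cites Cohn rather than Zaks for the fact that the normalization $\mathcal O\subset k[s]$ is of the form $k[\theta]$, but the content is the same as your Dedekind-domain endgame.
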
 

\begin{proof}
Suppose that $R\subset K[s]=K^{[1]}$, and let $\deg$ be the associated degree function in $s$ over $K$. 

Consider first the case $k$ is algebraically closed. The set 
\[
\Sigma :=\{\deg w\, |\, w\in R, w\ne 0\}\subset\N
\]
is a semigroup, and is therefore finitely generated as a semigroup.  Let $w_1,...,w_m\in R$ be such that
$\Sigma = <\deg w_1,...,\deg w_m>$, and define $S=k[w_1,...,w_m]\subset R$. Then, given $v\in R$, there exists $u\in S$ such that $\deg u=\deg v$. Assume that $\deg v\ge 1$. 

As in the preceding proof, since $u$ and $v$ are algebraically dependent over $k$, $\bar{u}$ and $\bar{v}$ are also algebraically dependent over $k$. Since $u$ and $v$ have the same degree,  there exists $P\in k[x,y]=k^{[2]}$ which is homogeneous relative to the standard $\Z$-grading of $k[x,y]$ such that $P(\bar{u},\bar{v})=0$. 
Write $P(x,y)=\Pi_{1\le i\le \ell}(\alpha_ix+\beta_iy)$, where $\ell$ is a positive integer and $\alpha_i,\beta_i\in k^*$ ($1\le i\le \ell$). Then 
$\alpha_i\bar{u}+\beta_i\bar{v}=0$ for some $i$. 
Therefore, $\deg (\alpha_iu+\beta_iv)<\deg v$ for some $i$. By induction on degrees, we can assume that  $\alpha_iu+\beta_iv\in S$, which implies $v\in S$, and $R=S$. Therefore, $R$ is finitely generated over $k$ when $k$ is algebraically closed. 

For general $k$, let $\hat{k},\hat{K}$ denote the algebraic closures of $k$ and $K$, respectively. Set $\hat{R}=\hat{k}\otimes_kR$. 
Then ${\rm tr.deg}_{\hat{k}}\hat{R}=1$, $\hat{R}\subset \hat{K}^{[1]}$ and $\hat{R}\not\subset \hat{K}$. By what was shown above, we conclude that $\hat{R}$ is affine over $\hat{k}$. Therefore, {\it Prop.~\ref{Nowicki}} implies that $R$ is affine over $k$. 

By {\it Thm.~\ref{main}}, there exists a field $F$ algebraic over $k$ with $R\subset F^{[1]}$. 
If $k$ is algebraically closed, then $F=k$ and $k\subset R\subset k[s]$ for some $s$ transcendental over $k$. If ${\cal O}$ is the integral closure of $R$ in ${\rm frac}(R)$, then since $k[s]$ is integrally closed, we have $k\subset R\subset {\cal O}\subset k[s]$. In this situation, it is known that ${\cal O}=k[\theta ]$ for some $\theta\in k[s]$; see \cite{Cohn.64}, Prop. 2.1.
\end{proof}

\begin{corollary}\label{Lenny}  {\rm (See \cite{Makar-Limanov.98}, Lemma 14)} Let $k$ be an algebraically closed field of characteristic zero, and let $B$ be a commutative $k$-domain. Given $r\in B$, if $r\not\in ML(B)$, then there exists $t\in {\rm frac}({\rm Alg}_{k[r]}B)$ such that ${\rm Alg}_{k[r]}B\subset k[t]$.
\end{corollary}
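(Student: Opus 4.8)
The plan is to derive Corollary~\ref{Lenny} directly from Theorem~\ref{main2} by exhibiting the right polynomial overring. Set $A = \mathrm{Alg}_{k[r]}B$, the ring of elements of $B$ algebraic over $k[r]$. Since $r \notin ML(B)$, there is a non-zero $D \in \mathrm{LND}(B)$ with $Dr \neq 0$. Choosing a local slice $s$ for $D$, equation~(\ref{cylinder}) gives $B_{Ds} = (A')_{Ds}[s] = (A')^{[1]}_{Ds}$, where $A' = \ker D$; in particular $B$, hence $A$, embeds in $K[s] = K^{[1]}$ with $K = \mathrm{frac}(A')$. So the first key step is to check the hypotheses of Theorem~\ref{main2} for $A$ in place of $R$: namely that $\mathrm{tr.deg}_k A = 1$, that $A \subset K^{[1]}$, and that $A \not\subset K$.

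For the transcendence degree: $r \in A$ is transcendental over $k$ (since $Dr \neq 0$ and constants lie in $\ker D$, an element of $k$ would have $Dr=0$), so $\mathrm{tr.deg}_k A \geq 1$; and every element of $A$ is algebraic over $k[r]$ by definition, so $\mathrm{tr.deg}_k A \le \mathrm{tr.deg}_k k[r] = 1$. Hence $\mathrm{tr.deg}_k A = 1$. For $A \not\subset K = \mathrm{frac}(\ker D)$: note $\ker D$ is algebraically closed in $B$, so $K \cap B = \ker D$ (more precisely, elements of $B$ lying in $K$ are algebraic over $\ker D$, hence in $\ker D$); since $r \in A \subset B$ and $Dr \neq 0$, we have $r \notin \ker D$, so $r \notin K$, giving $A \not\subset K$. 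This verifies all the hypotheses.

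Now apply Theorem~\ref{main2} to $A$: since $k$ is algebraically closed, the theorem yields $t \in \mathrm{frac}(A)$ with $A \subset k[t]$, which is exactly the assertion of the corollary.

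The only mild subtlety — and the step I expect to need the most care — is confirming that $A = \mathrm{Alg}_{k[r]}B$ genuinely embeds in the polynomial ring $K^{[1]}$ in the right way: one must observe that $A \subset B \subset B_{Ds} = K'[s]$ where $K' = (A')_{Ds}$ is a localization of $\ker D$, and then pass to the fraction field $K = \mathrm{frac}(\ker D) = \mathrm{frac}(K')$ so that $A \subset K[s] = K^{[1]}$. This is routine given (\ref{cylinder}), but it is where the locally nilpotent derivation hypothesis is actually consumed. Everything else is a direct bookkeeping check of the hypotheses of Theorem~\ref{main2}, and no new ideas beyond that theorem are required.
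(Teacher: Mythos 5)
Your proof is correct and follows essentially the same route as the paper: produce $D\in{\rm LND}(B)$ with $Dr\ne 0$, use equation (\ref{cylinder}) to embed $B$ (hence ${\rm Alg}_{k[r]}B$) into $K^{[1]}$ with $K={\rm frac}(\ker D)$, note $r\notin K$ because $\ker D$ is algebraically closed in $B$, and invoke Theorem~\ref{main2}. The only difference is that you spell out the verification of the hypotheses (transcendence degree one, non-containment in $K$) that the paper leaves implicit.
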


\begin{proof} By hypothesis, there exists $D\in {\rm LND}(B)$ with $Dr\ne 0$. If $A=\krn D$ and $K={\rm frac}(A)$, then $K\otimes_kB=K^{[1]}$ by equation (\ref{cylinder}).
We therefore have ${\rm Alg}_{k[r]}B\subset K^{[1]}$, and $r\not\in K$. The result now follows by {\it Thm.~\ref{main2}}.
\end{proof}

Makar-Limanov asked whether this result generalizes to rings of transcendence degree 2: Let $k$ be an algebraically closed field of characteristic zero, and let $B$ be a commutative $k$-domain. Given $x,y\in B$, does the following implication hold?
\[
{\rm Alg}_{k[x,y]}B\cap ML(B)=k \quad\Rightarrow\quad {\rm Alg}_{k[x,y]}B\subset k^{[2]}
\]
See \cite{Makar-Limanov.98}, p.39. 

\begin{example}\label{ex1} {\rm Let $k\subset K$ be fields, where $K=k[\alpha ]$ is a simple algebraic extension of $k$, and $[K:k]\ge 2$. Define:
\[
R=k[u,v]\subset K[s]=K^{[1]}
\]
where $u=\alpha s^2$ and $v=\alpha s^3$. Since $s=v/u$ and $\alpha =u^3/v^2$, we see that ${\rm frac}(R)=K(s)$. If $R\subset k[t]$ for $t\in {\rm frac}(R)$, then 
$k(t)={\rm frac}(R)=K(s)$, which is not possible. Therefore, the ring $R$ cannot be embedded in $k^{[1]}$. 
This shows that the hypothesis that the field $k$ be algebraically closed is necessary in the last statement of {\it Thm.~\ref{main2}}.}
\end{example} 

\begin{example} {\rm As an illustration of {\it Cor.~\ref{Lenny}}, let $k[x,y]=k^{[2]}$, and write $k[x,y]=\oplus_{i\ge 0}V_i$, where $V_i$ is the vector space of binary forms of degree $i$ over $k$. Define 
$D\in {\rm LND}(k[x,y])$ by $D=x\frac{\partial}{\partial y}$. Then $D$ is linear, meaning that $D(V_i)\subset V_i$ for each $i$. Therefore, if $B=k[V_2,V_3]$, then $D$ restricts to $B$. Let $R$ be the algebraic closure of $k[y^2]$ in $B$, noting that $D(y^2)\ne 0$. Then $R=k[y^2,y^3]$ and ${\rm frac}(R)=k(y)$. }
\end{example}


\section{Cancellation Theorem for Rings of Transcendence Degree One}\label{cancel}

\subsection{Integral Extensions and the Conductor Ideal}

\begin{definition} {\rm Let $A\subset B$ be integral domains. The {\it conductor of $B$ in $A$} is:
\[
{\cal C}_A(B)=\{ a\in A\, |\, aB\subset A\}
\]
If ${\cal O}$ is the integral closure of $A$ in  ${\rm frac}(A)$, then the {\it conductor ideal of $A$} is ${\cal C}_A({\cal O})$. 
}
\end{definition}
Note that ${\cal C}_A(B)$ is an ideal of both $A$ and $B$, and is the largest ideal of $B$ contained in $A$. The following two properties of the conductor are easily verified.
\begin{itemize}
\medskip
\item [(C.1)] \quad  ${\cal C}_{A^{[n]}}(B^{[n]})={\cal C}_A(B)\cdot B^{[n]}$ for every $n\ge 0$
\medskip
\item [(C.2)] \quad $D{\cal C}_A(B)\subset {\cal C}_A(B)$ for every $D\in {\rm Der}(B)$ restricting to $A$
\end{itemize}

\begin{lemma}\label{conductor} Let $k$ be a field, let $A$ be an integral domain containing $k$, and let $\mathfrak{C}\subset A$ be the conductor ideal of $A$. If $A$ is affine over $k$, then 
$\mathfrak{C}\ne (0)$.
\end{lemma}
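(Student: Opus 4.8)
The plan is to exhibit a single nonzero element of $A$ that carries the entire integral closure back inside $A$. Write $F={\rm frac}(A)$ and let ${\cal O}$ denote the integral closure of $A$ in $F$, so that by definition $\mathfrak{C}={\cal C}_A({\cal O})=\{a\in A\mid a{\cal O}\subset A\}$. It therefore suffices to produce $f\in A$ with $f\ne 0$ and $f{\cal O}\subset A$; such an $f$ lies in $\mathfrak{C}$ and shows $\mathfrak{C}\ne(0)$.

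The essential ingredient is the classical finiteness theorem for the integral closure of an affine domain: since $A$ is finitely generated over the field $k$, it is a Noetherian domain and its integral closure ${\cal O}$ in $F$ is a finitely generated $A$-module. This is the step I expect to be doing all the real work; everything after it is purely formal. The statement holds over an arbitrary ground field $k$ (it does not require $k$ to be perfect), and one may either cite it directly or derive it from Noether normalization together with the fact that a polynomial ring over a field is a Nagata ring, so that its integral closure in the finite extension $F$ of its fraction field is module-finite.

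Granting the finiteness of ${\cal O}$ over $A$, I would choose a finite set of $A$-module generators $m_1,\dots,m_n$ of ${\cal O}$. Each $m_i$ lies in $F={\rm frac}(A)$, so we may write $m_i=a_i/b_i$ with $a_i,b_i\in A$ and $b_i\ne 0$; set $f=b_1b_2\cdots b_n$. Since $A$ is a domain, $f\ne 0$, and $fm_i\in A$ for every $i$; because ${\cal O}=\sum_i Am_i$, this gives $f{\cal O}\subset A$. Hence $f\in{\cal C}_A({\cal O})=\mathfrak{C}$ and $\mathfrak{C}\ne(0)$, as claimed. (Affineness is genuinely needed: without it the integral closure may fail to be a finite module and the conductor ideal can be zero.)
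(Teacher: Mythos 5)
Your proof is correct and follows essentially the same route as the paper: both invoke the module-finiteness of the integral closure of an affine domain, choose a finite set of $A$-module generators of ${\cal O}$, and clear denominators to obtain a nonzero element of the conductor. The only difference is cosmetic --- you spell out the common-denominator construction of $f$, while the paper simply asserts the existence of such an element.
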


\begin{proof} Since $A$ is affine over $k$, its normalization ${\cal O}$ is also affine over $k$, and is finitely generated as an $A$-module (see \cite{Hartshorne.77}, Ch. I, Thm. 3.9A). 
Let $\{\omega_1,...,\omega_n\}$ be a generating set for ${\cal O}$ as an $A$-module, and let non-zero $a\in A$ be such that $a\omega_1,...,a\omega_n \in A$. Then $a\in\mathfrak{C}$. 
\end{proof}

\begin{theorem} {\rm (Seidenberg \cite{Seidenberg.66})} Let $A$ be a noetherian integral domain containing $\Q$, 
and let ${\cal O}$ be the integral closure of $A$ in ${\rm frac}(A)$. Then every $D\in {\rm Der}(A)$ extends to ${\cal O}$. 
\end{theorem}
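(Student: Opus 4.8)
\noindent\textit{Proof proposal.} The plan is to follow Seidenberg's strategy. First extend $D$ to a derivation of $K:={\rm frac}(A)$ by the quotient rule (this extension is unique), so that it suffices to show $D\omega\in{\cal O}$ for every $\omega\in{\cal O}$. Fix such an $\omega$ and a monic relation $g=T^m+b_{m-1}T^{m-1}+\cdots+b_0\in A[T]$ with $g(\omega)=0$. Applying $D$ to $g(\omega)=0$ and writing $g^D:=\sum_i(Db_i)T^i\in A[T]$, the Leibniz rule yields the identity $g'(\omega)\,D\omega=-g^D(\omega)\in A[\omega]\subseteq{\cal O}$; iterating it expresses each $D^n\omega$ as $h_n(\omega)/g'(\omega)^{2n-1}$ with $h_n\in A[T]$ of degree $<m$, so that the ring $C:=A[\omega,D\omega,D^2\omega,\ldots]$ is $D$-stable and contained in the Noetherian localization $A[\omega]_{g'(\omega)}$. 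When $A$ is affine one can sharpen this: Lemma~\ref{conductor} provides a nonzero conductor $\mathfrak{C}$ of $A$, and the same Leibniz computation applied to $c\omega\in A$ (for $c\in\mathfrak{C}$) shows $\mathfrak{C}\cdot D({\cal O})\subseteq{\cal O}$, i.e.\ $D\omega$ lies in the fractional ideal $\mathfrak{C}^{-1}{\cal O}$.

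The real content is then to upgrade ``$D\omega$ lies in the localization $A[\omega]_{g'(\omega)}$'' (or in $\mathfrak{C}^{-1}{\cal O}$) to ``$D\omega$ is integral over $A$''. I would do this by localizing and reducing dimension. Since the integral closure of a Noetherian domain is a Krull domain (Mori--Nagata), ${\cal O}=\bigcap_{\mathfrak{p}}{\cal O}_{\mathfrak{p}}$ over its height-one primes, each ${\cal O}_{\mathfrak{p}}$ a DVR; $D$ restricts to the localization of $A$ at $\mathfrak{q}:=\mathfrak{p}\cap A$, so it is enough to check $D\omega\in{\cal O}_{\mathfrak{p}}$ for each $\mathfrak{p}$, and we may replace $A$ by $A_{\mathfrak{q}}$, a one-dimensional Noetherian local domain. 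If $A_{\mathfrak{q}}$ is already normal there is nothing to prove. Otherwise ${\cal O}_{\mathfrak{q}}$ is a semilocal principal ideal domain (module-finite over $A_{\mathfrak{q}}$ in the affine case), and it suffices to prove $D\theta\in{\cal O}_{\mathfrak{q}}$ for a single generator $\theta$ of ${\cal O}_{\mathfrak{q}}$ over $A_{\mathfrak{q}}$, which we are free to choose conveniently (possible since ${\rm char}\,k=0$ makes the residue extensions separable). At the DVRs of ${\cal O}_{\mathfrak{q}}$ over which $A_{\mathfrak{q}}\subseteq{\cal O}_{\mathfrak{q}}$ is unramified one chooses $\theta$ so that $g'(\theta)$ is a unit there, and the displayed identity gives $D\theta\in{\cal O}_{\mathfrak{q}}$ locally; the remaining ramified places are handled by passing to the completion, where each branch is a power series ring $\kappa[[\pi]]$ and the defining relations of $A_{\mathfrak{q}}$, which constrain $D$, combine with separability of $\kappa$ over the residue field of $A_{\mathfrak{q}}$ to force $D$ to preserve it.

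The step I expect to be the main obstacle is precisely this last one: controlling the denominator $g'(\omega)$, i.e.\ the ramification of $A\subseteq{\cal O}$. This is where the hypothesis $\Q\subseteq A$ is indispensable and where Seidenberg's real work lies; the theorem is false without it --- for instance, for $A=k[x^p,x^{p+1}]\subset k[x]={\cal O}$ with ${\rm char}\,k=p$, the derivation of $A$ determined by $D(x^p)=0$ and $D(x^{p+1})=1$ extends to $K=k(x)$ with $Dx=x^{-p}\notin{\cal O}$ --- so no purely formal manipulation of the Leibniz rule can suffice, and the structure theory of normalizations of Noetherian domains (finiteness of integral closure in the affine case, Mori--Nagata in general) is genuinely needed.
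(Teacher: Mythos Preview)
The paper does not prove Seidenberg's theorem; it is quoted as a known result with a reference to \cite{Seidenberg.66} and used as a black box in Section~\ref{cancel}. There is no argument in the paper to compare yours against.

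On your sketch itself: the opening steps are correct and standard---the unique extension of $D$ to $K={\rm frac}(A)$ by the quotient rule, the identity $g'(\omega)\,D\omega=-g^{D}(\omega)\in A[\omega]\subset{\cal O}$, the iteration showing $D^{n}\omega\in A[\omega][1/g'(\omega)]$, and the Mori--Nagata reduction to height-one primes of the Krull domain ${\cal O}$. One technical point needs care: after setting $\mathfrak q=\mathfrak p\cap A$ you assert that $A_{\mathfrak q}$ is a one-dimensional local ring and then that its normalization is a semilocal PID. Incomparability for the integral extension $A\subset{\cal O}$ gives only ${\rm ht}\,\mathfrak p\le{\rm ht}\,\mathfrak q$; the reverse inequality would need going-down, which can fail here since $A$ is not normal (there exist Noetherian local domains whose normalization has maximal ideals of unequal heights). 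The reduction can be reorganized---for instance by working directly in the DVR ${\cal O}_{\mathfrak p}$, where your displayed identity already disposes of every $\mathfrak p$ with $g'(\omega)\notin\mathfrak p$---but as written it is not clean.

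More importantly, you correctly isolate the real difficulty, namely the finitely many ramified primes where $g'(\omega)\in\mathfrak p$, and then offer only a gesture toward completions and separability (``combine with separability \ldots\ to force $D$ to preserve it''). You are candid that this is the obstacle and that it is where the hypothesis $\Q\subset A$ does its work; your characteristic-$p$ counterexample with $A=k[x^{p},x^{p+1}]$ is correct and makes exactly the right point. But the ramified case is not actually argued, so what you have is an accurate outline of Seidenberg's strategy rather than a proof. The paper's choice simply to cite the result is the honest alternative.
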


\begin{theorem} {\rm (Vasconcelos \cite{Vasconcelos.69})} Let $A\subset A^{\prime}$ be integral domains containing $\Q$, 
where $A^{\prime}$ is an integral extension of $A$. If $D\in {\rm LND}(A)$ extends to $D^{\prime}\in {\rm Der}(A^{\prime})$, then $D^{\prime}\in {\rm LND}(A^{\prime})$.
\end{theorem}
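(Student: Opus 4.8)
The plan is to show that each $b\in A'$ is killed by a power of $D'$. I would first dispose of the case $D=0$: choosing a monic relation $b^n+a_{n-1}b^{n-1}+\cdots+a_0=0$ of least degree over $A$ and applying $D'$, the terms $Da_i$ vanish and one is left with $P'(b)\,D'b=0$, where $P'$ denotes the formal $x$-derivative of $P=x^n+a_{n-1}x^{n-1}+\cdots+a_0$; since $n^{-1}P'$ is monic of smaller degree over $A$ (this uses $\Q\subset A$), minimality of $n$ forces $P'(b)\neq 0$, so $D'b=0$. Assuming now $D\neq 0$, I would pass to a local slice $s$ of $D$ and localize at $Ds$: by (\ref{cylinder}) this replaces $A$ by $(\krn D)_{Ds}[s]$ with $D=d/ds$, and does not affect the conclusion because $A'\hookrightarrow A'_{Ds}$. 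Base-changing along $(\krn D)_{Ds}\hookrightarrow F_0:={\rm frac}\big((\krn D)_{Ds}\big)$ reduces us to $A=F_0[s]=F_0^{[1]}$, $D=d/ds$, with $\Q\subseteq F_0$. Finally, fixing $b$ and replacing $A'$ by the $D'$-stable subring $A'\cap F(b)$, where $F={\rm frac}(A)=F_0(s)$ (note $D'$ preserves $F[b]$, since the minimal polynomial of $b$ over $F$ expresses $D'b$ as an element of $F[b]$), I may assume $E:={\rm frac}(A')$ is a \emph{finite} extension of $F$ --- hence an algebraic function field in one variable over its field of constants $\kk\supseteq F_0$.

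The crux is a local estimate at the place $v_\infty=-\deg_s$ of $F$. Let $\mathfrak p_1,\dots,\mathfrak p_g$ be the places of $E$ over $v_\infty$, each normalized to restrict to $v_\infty$ on $F$. I would prove that $\mathfrak p_i(D'x)\ge\mathfrak p_i(x)+1$ for all $x\in E^{\times}$, as follows. Pass to completions. On $\widehat F_{v_\infty}=F_0(\!(1/s)\!)$ the derivation $d/ds$ carries $1/s$ to $-1/s^2$, so it satisfies the estimate. The completion $\widehat E_{\mathfrak p_i}$ is a complete discretely valued field of equal characteristic zero with residue field finite over $F_0$; hence the algebraic closure of $F_0$ inside it is a coefficient field on which $D'$ vanishes (its elements are algebraic over $F_0\subseteq\krn D$, so the argument of the first paragraph applies). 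Consequently $D'$ is continuous and has the form $(D'\pi_i)\,d/d\pi_i$ for a uniformizer $\pi_i$, and comparing $D'(1/s)=-1/s^2$ against this form, using $1/s=\pi_i^{e_i}\cdot(\text{unit})$ with $e_i$ the ramification index, pins down the order of $D'\pi_i$ and yields the claimed estimate. Iterating gives $\mathfrak p_i\big((D')^kb\big)\ge\mathfrak p_i(b)+k$ for every $k$.

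Finally I would invoke the product formula. Each $(D')^kb$ lies in $A'$ and is integral over $A=F_0[s]$, which sits inside the valuation ring of every place of $E/\kk$ lying over a place of $F/F_0$ other than $v_\infty$; so $\mathfrak q\big((D')^kb\big)\ge 0$ at all such $\mathfrak q$, while $\mathfrak p_i\big((D')^kb\big)\ge\mathfrak p_i(b)+k$. If $(D')^kb\neq 0$, the product formula for the function field $E/\kk$ then forces $\sum_i(\text{positive weight})\cdot\big(\mathfrak p_i(b)+k\big)\le 0$, which is impossible once $k$ is large. Therefore $(D')^kb=0$ for $k\gg 0$, and $D'\in{\rm LND}(A')$. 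I expect the one genuine obstacle to be the local estimate of the second paragraph: everything else is bookkeeping, but that estimate is exactly where characteristic zero enters --- it would break down if $D'$ acted nontrivially on a residue field, and it is characteristic zero (together with $F_0\subseteq\krn D$) that rules this out.
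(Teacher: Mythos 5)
Your argument is correct in outline, but note that the paper does not prove this statement at all: it is quoted from Vasconcelos's 1969 paper as a known tool, so there is no in-paper proof to compare against. Measured against the standard arguments (Vasconcelos's original one, or the degree-function proof in \cite{Freudenburg.06}), yours is a genuinely different, more geometric route. Your reductions are all sound: the $D=0$ case is the usual characteristic-zero separability argument; localizing at $Ds$ and at $\krn D\setminus\{0\}$ only enlarges $A'$, so proving local nilpotence there suffices; and cutting down to $A'\cap F(b)$ is legitimate because $D'$ stabilizes $F(b)$ (differentiate the minimal polynomial, using $m'(b)\neq 0$). The heart is your local estimate at the places over $v_\infty$, and the computation checks out: since $D'$ kills the constant field $\kk$ (algebraic over $F_0\subseteq\krn D$, again by the characteristic-zero argument), ${\rm Der}_{\kk}(E)$ being one-dimensional gives $D'=(D'\pi_i)\,d/d\pi_i$, and comparing with $D'(1/s)=-1/s^2$ pins down ${\rm ord}_{\mathfrak p_i}(D'\pi_i)=e_i+1$, whence $w_i(D'x)\ge w_i(x)+e_i$ in the normalized valuation. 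The one ingredient you should cite explicitly rather than rederive via completions is the standard fact that local expansions in a function field can be differentiated term by term at a place whose residue field is separable over $\kk$ (Chevalley, \emph{Algebraic Functions of One Variable}, Ch.~IV); this both justifies ${\rm ord}(dx/d\pi_i)\ge{\rm ord}(x)-1$ and sidesteps the slight circularity in first extending $D'$ continuously to $\widehat E_{\mathfrak p_i}$ in order to argue it kills the coefficient field. The finish by the degree-zero property of principal divisors is clean and robust --- in particular you correctly avoid the trap of claiming integral elements have no pole at infinity; you only use nonnegativity at the finite places, and the $+k$ growth at infinity does the rest. What this approach buys is a transparent explanation of \emph{why} the theorem holds (iterated derivatives must acquire ever-deeper zeros along the fiber at infinity while staying regular elsewhere); what it costs is reliance on the function-field machinery of places and divisors, which the more elementary module-theoretic or degree-function proofs avoid.
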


\subsection{The Theorem of Abhyankar, Eakin, and Heinzer}
\begin{theorem} {\rm (\cite{Abhyankar.Eakin.Heinzer.72}, (3.3))} Let $k$ be a field, and let $R,S$ be integral $k$-domains of transcendence degree one over $k$.
If $R^{[n]}\cong_kS^{[n]}$ for some $n\ge 0$, then $R\cong_kS$. 
\end{theorem}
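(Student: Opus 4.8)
The plan is to reduce to the case $n=1$ and then play the main theorem off against the derivation results of Seidenberg and Vasconcelos. First I would observe that it suffices to treat $n=1$, by an obvious induction: if $R^{[n]}\cong_k S^{[n]}$ then, writing $R^{[n]}=(R^{[n-1]})^{[1]}$ and similarly for $S$, the $n=1$ case gives $R^{[n-1]}\cong_k S^{[n-1]}$, and we descend. So assume $\varphi: R^{[1]}\xrightarrow{\sim} S^{[1]}$ is a $k$-isomorphism; write $R[x]=R^{[1]}$ and $S[y]=S^{[1]}$, and identify $R[x]$ with $S[y]$ via $\varphi$.

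Next I would produce a nonzero locally nilpotent derivation on $S[y]$ whose kernel is essentially $S$, namely $D_0=d/dy\in\mathrm{LND}(S[y])$, with $\ker D_0=S$. The idea, following Makar-Limanov, is to transport this to the $R$-side and compare Makar-Limanov invariants. On $R[x]$ the derivation $d/dx$ has kernel $R$, and under $\varphi$ we get a second element of $\mathrm{LND}(S[y])$ with kernel $\varphi^{-1}(R)$ sitting inside $S[y]$. The key structural fact to extract is: since $\mathrm{tr.deg}_k R=\mathrm{tr.deg}_k S=1$, both kernels $R$ and $\varphi(S)$ are rings of transcendence degree one over $k$ sitting inside the two-dimensional-over-$k$ ring $R[x]=S[y]$; I want to show each is, up to the passage to integral closure, a polynomial ring in one variable over a field algebraic over $k$, and then match them.

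Here is where the main theorem enters. Consider the ring $R$ (of transcendence degree one over $k$) inside $R[x]=S[y]\subset \mathrm{frac}(S)[y]=\mathrm{frac}(S)^{[1]}=:K^{[1]}$. If $R\subset K$, one argues directly that $R$ and $S$ are then forced to be isomorphic (both would be transcendence-degree-one subrings in a tame configuration — this degenerate case needs a short separate argument, perhaps via $\mathrm{tr.deg}$ counting, since $R[x]=S[y]$ forces $\mathrm{frac}(R)(x)=\mathrm{frac}(S)(y)$ and L\"uroth-type rigidity applies). Otherwise $R\not\subset K$, and \emph{Thm.~\ref{main2}} applies: $R$ is $k$-affine and $R\subset F^{[1]}$ for some field $F$ algebraic over $k$. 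By symmetry $S$ is $k$-affine and $S\subset F'^{[1]}$ with $F'$ algebraic over $k$. Now I would bring in the conductor: by \emph{Lemma~\ref{conductor}}, the conductor ideals $\mathfrak{C}_R$ and $\mathfrak{C}_S$ of $R$ and $S$ are nonzero, and by property (C.1) the conductor ideal of $R[x]=S[y]$ is $\mathfrak{C}_R\cdot R[x]=\mathfrak{C}_S\cdot S[y]$. Let $\mathcal{O}_R,\mathcal{O}_S$ be the normalizations of $R,S$; then the normalization of $R[x]=S[y]$ is $\mathcal{O}_R[x]=\mathcal{O}_S[y]$, and Seidenberg's theorem lets $d/dx$ and $d/dy$ extend to it, Vasconcelos's theorem guarantees the extensions stay locally nilpotent, and (C.2) says both extensions preserve the common conductor ideal $\mathfrak{C}$. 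The plan is to conclude that $\mathcal{O}_R$ and $\mathcal{O}_S$, being the kernels of these two commuting-with-the-conductor locally nilpotent derivations on a normal two-dimensional ring, must each be $F^{[1]}$ and $F'^{[1]}$ respectively for the \emph{same} field (using that $\ker D$ is always algebraically closed and is forced by normality and the polynomial structure from \emph{Thm.~\ref{main2}} to be an honest one-variable polynomial ring over a field algebraic over $k$), and then match up $F$ with $F'$ by comparing residue structures modulo $\mathfrak{C}$. Finally, descend from $\mathcal{O}_R\cong\mathcal{O}_S$ back to $R\cong S$ by noting that $R$ is recovered from $\mathcal{O}_R$ together with the conductor data, which is intrinsic to $R[x]$ and hence symmetric.

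The hard part will be the last descent: extracting an honest isomorphism $R\cong_k S$ rather than merely $\mathrm{frac}(R)\cong\mathrm{frac}(S)$ or $\mathcal{O}_R\cong\mathcal{O}_S$. Matching the normalizations is relatively clean once \emph{Thm.~\ref{main2}} has pinned them down as polynomial rings over algebraic-over-$k$ fields and Seidenberg/Vasconcelos have provided the locally nilpotent derivations; but the conductor ideal $\mathfrak{C}\subset R[x]=S[y]$, while intrinsic to that polynomial ring, must be shown to cut $R$ out of $\mathcal{O}_R[x]$ and $S$ out of $\mathcal{O}_S[y]$ in a way that is compatible under the isomorphism — i.e. that the subring of $\mathcal{O}_R$ consisting of elements congruent mod $\mathfrak{C}\cap\mathcal{O}_R$ to something in $R/(\mathfrak{C}\cap R)$ is forced, by the rigidity of curves, to match the analogous subring on the $S$-side. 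I expect this to require a careful argument that the data $(\mathcal{O}_R[x]=\mathcal{O}_S[y],\ \mathfrak{C},\ d/dx,\ d/dy)$ determines both $R$ and $S$, using that a transcendence-degree-one $k$-domain is determined by its normalization and its finitely many non-normal points, all of which are visible inside the common polynomial ring.
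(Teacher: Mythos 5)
Your opening reduction is where the proposal breaks: you cannot reduce to $n=1$ by induction. To pass from $R^{[n]}\cong_k S^{[n]}$ to $R^{[n-1]}\cong_k S^{[n-1]}$ via the $n=1$ case, you would have to apply that case to the rings $R^{[n-1]}$ and $S^{[n-1]}$, which have transcendence degree $n$ over $k$, not $1$ --- and cancellation is simply false in transcendence degree $\ge 2$ (Danielewski surfaces give affine $\C$-domains $A\not\cong B$ with $A^{[1]}\cong B^{[1]}$). The paper avoids this entirely by working with $R^{[n]}$ directly: after reducing to the case that $k$ is algebraically closed in $R$, it observes that $ML(R^{[n]})$ is an algebraically closed subalgebra of $R$, hence equals $R$ or $k$. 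In the first case $\alpha(R)=\alpha(ML(R^{[n]}))=ML(S^{[n]})=S$ and one is done; in the second case there is \emph{some} $D\in\mathrm{LND}(R^{[n]})$ with $DR\ne 0$ (not a coordinate derivation $d/dy$, which only exists for $n=1$), and one runs the conductor argument with that $D$. Your alternative dichotomy ``$R\subset K$ or not'' (with $K=\mathrm{frac}(S)$) leaves the degenerate case genuinely unhandled --- ``L\"uroth-type rigidity'' does not produce a ring isomorphism $R\cong S$ from containments of fraction fields --- whereas the $ML$-dichotomy disposes of it in one line.

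The second gap is the endgame. You plan to prove $\mathcal{O}_R\cong\mathcal{O}_S$ and then descend to $R\cong S$ by matching conductor data, and you correctly flag this descent as the hard part; in fact it is unnecessary, and the paper's punchline is precisely that no descent is needed. Once $\mathcal{O}=k[t]$ (which requires first base-changing to $\hat{k}$ and quoting triviality of forms of the affine line to come back down --- a reduction you omit, and without which \emph{Thm.~3.1} only gives $F$ algebraic over $k$), the conductor $\mathfrak{C}$ is a nonzero principal ideal $h\cdot k[t]$ stable under the extended locally nilpotent derivation, so $Dh\in h\cdot\mathcal{O}^{[n]}$ forces $Dh=0$; since $\ker D$ is algebraically closed and $R$ is algebraic over $k[h]$ when $h\notin k$, this would force $R\subset\ker D$, a contradiction. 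Hence $h\in k^*$, i.e.\ $\mathfrak{C}=(1)$ and $R=\mathcal{O}=k[t]$ already. Both $R$ and $S$ are then polynomial rings, so they are isomorphic with no comparison of residue structures modulo $\mathfrak{C}$ required. I recommend you restructure around the $ML$-invariant dichotomy and aim to prove $R=k^{[1]}$ outright in the non-rigid case rather than matching normalizations.
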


\begin{proof} (characteristic $k=0$) Since $R$ is algebraically closed in $R^{[n]}$, we have:
\[
{\rm Alg}_k(R^{[n]})={\rm Alg}_k(R)
\]
Let $\alpha :R^{[n]}\to S^{[n]}$ be an isomorphism of $k$-algebras. If $k^{\prime}={\rm Alg}_k(R)$, then $\alpha (k^{\prime})={\rm Alg}_k(S)$, since $S$ is algebraically closed in $S^{[n]}$. Therefore,
identifying $k^{\prime}$ and $\alpha (k^{\prime})$, we can view $R$ and $S$ as $k^{\prime}$-algebras, and $\alpha$ as a $k^{\prime}$-isomorphism.  It thus suffices to assume that $k$ is algebraically closed in $R$. 

Since $ML(R^{[n]})\subset ML(R)$, we see that $ML(R^{[n]})$ is an algebraically closed subalgebra of $R$. Therefore, either $ML(R^{[n]})=R$ or $ML(R^{[n]})=k$.
\medskip

Case 1: $ML(R^{[n]})=R$. In this case, we also must have $ML(S^{[n]})=S$. Since $\alpha$ maps the Makar-Limanov invariant onto itself, we conclude that $\alpha (R)=S$.
\medskip

Case 2: $ML(R^{[n]})=k$. We will show that $R=k^{[1]}$ in this case. It suffices to assume that $k$ is an algebraically closed field: If $\hat{k}$ is the algebraic closure of $k$, and $\hat{R}=\hat{k}\otimes_kR$, then $ML(\hat{R}^{[n]})=\hat{k}$.  If this implies $\hat{R}=\hat{k}^{[1]}$, then $R=k^{[1]}$. (All forms of the affine line over a perfect field are trivial; see \cite{Russell.70}.) 

So assume that $k$ is algebraically closed. By hypothesis, there exists $D\in {\rm LND}(R^{[n]})$ with $DR\ne 0$. If ${\cal O}$ is the integral closure of $R$ in ${\rm frac}(R)$, then 
${\cal O}^{[n]}$ is the integral closure of $R^{[n]}$ in ${\rm frac}(R^{[n]})$. By property (C.1), if $\mathfrak{C}$ is the conductor ideal of $R$, then
$\mathfrak{C}\cdot {\cal O}^{[n]}$ is the conductor ideal of $R^{[n]}$. 

Let $s$ be a local slice of $D$, and let $K={\rm frac}(\krn D)$. 
Then by equation (\ref{cylinder}), $R\subset K[s]$ and $R\not\subset K$. 
By {\it Thm.~\ref{main2}}, $R$ is $k$-affine, and there exists $t\in {\rm frac}(R)$ such that ${\cal O}=k[t]$. 
By the theorems of Seidenberg and Vasconcelos, $D$ extends to a locally nilpotent derivation of ${\cal O}^{[n]}$; and by property (C.2), 
$D(\mathfrak{C}\cdot {\cal O}^{[n]})\subset\mathfrak{C}\cdot {\cal O}^{[n]}$.

By {\it Lemma~\ref{conductor}}, $\mathfrak{C}\ne 0$. Since $\mathfrak{C}$ is an ideal of $k[t]$, there exists non-zero $h\in R$ with $\mathfrak{C}=h\cdot k[t]$. Thus, $\mathfrak{C}\cdot O^{[n]}=h\cdot {\cal O}^{[n]}$ and $D(h\cdot {\cal O}^{[n]})\subset h\cdot {\cal O}^{[n]}$. Therefore, $Dh=0$.  
If $h\not\in k$, then $k[h]\subset\krn D$ implies $R\subset\krn D$, which is not the case. Therefore, $h\in k^*$ and $R=k[t]$. By symmetry, $S=k^{[1]}$.
\end{proof}


\section{Appendix}

Combining the theorems of L\"uroth and Nagata gives the following corollary.

\begin{corollary} Suppose that $k\subset L$ are fields, where $k$ is algebraically closed, $L$ is finitely generated over $k$, and ${\rm tr.deg}_kL=1$. If there exists a field $E$ containing $k$ such that $L\subset E^{(1)}$ and $L\not\subset E$, then $L=k^{(1)}$. 
\end{corollary}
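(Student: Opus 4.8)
The plan is to reduce to a situation where Nagata's theorem (Theorem~1.1) applies, extract from it a finite algebraic extension of $L$ of the form $M^{(1)}$, pin down $M$ by a transcendence-degree count, and finally invoke L\"uroth's theorem to identify $L$ itself.

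First I would arrange for the ``base'' field to be finitely generated over $k$, which is what makes Nagata's theorem applicable. Write $L=k(a_1,\dots,a_n)$ with each $a_i\in E^{(1)}=E(x)$. Each $a_i$ is a quotient of two polynomials in $x$ with coefficients in $E$, and only finitely many elements of $E$ occur among all these coefficients; let $E_0\subset E$ be the subfield generated over $k$ by them. Then $E_0$ is finitely generated over $k$, we still have $L\subset E_0(x)=E_0^{(1)}$, and, since $E_0\subset E$ while $L\not\subset E$, also $L\not\subset E_0$.

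Next I would apply Nagata's theorem with $K:=E_0$: the hypotheses $k\subset E_0$, $k\subset L\subset E_0^{(1)}$, $E_0$ finitely generated over $k$, and $L\not\subset E_0$ are all in hand, so there is a field $M$ containing $k$ together with a finite algebraic extension $L\subset M^{(1)}$. Since $M^{(1)}$ is algebraic over $L$, the additivity of transcendence degree gives $1={\rm tr.deg}_kL={\rm tr.deg}_kM^{(1)}={\rm tr.deg}_kM+1$, hence ${\rm tr.deg}_kM=0$; that is, $M$ is algebraic over $k$. Because $k$ is algebraically closed, $M=k$, and therefore $L\subset k^{(1)}$.

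Finally, since ${\rm tr.deg}_kL=1$ we have $k\subsetneq L\subset k^{(1)}=k(x)$, so L\"uroth's theorem yields $y\in k(x)$ with $L=k(y)$, i.e.\ $L=k^{(1)}$. I do not expect a serious obstacle: the only points requiring care are the descent to the finitely generated coefficient field $E_0$ (so that Nagata's theorem is literally applicable) and the remark that L\"uroth's theorem is genuinely needed at the end, since Nagata's theorem on its own only exhibits $L$ as a subfield of $k^{(1)}$ rather than identifying it with $k^{(1)}$.
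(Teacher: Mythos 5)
Your proposal is correct and follows essentially the same route as the paper: restrict to the finitely generated coefficient field so that Nagata's theorem applies, deduce $M=k$ by the transcendence-degree count together with $k$ being algebraically closed, and finish with L\"uroth. The only (welcome) addition is your explicit check that $L\not\subset E_0$, which the paper leaves implicit.
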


\begin{proof} Assume that $L\subset E(s)=E^{(1)}$. Let $\alpha_1,...,\alpha_n\in L$ be such that $L=k(\alpha_1,...,\alpha_n)$.
Choose $f_i(s),g_i(s)\in E[s]$ such that $\alpha_i=f_i/g_i$, and let $K$ be the subfield of $E$ generated by the coefficients of $f_i$ and $g_i$, $1\le i\le n$. Then $K$ is finitely generated over $k$, and $L\subset K(s)$. By Nagata's theorem, there exists a finite algebraic extension $L\subset M^{(1)}$ for some field $M$ containing $k$. Since the transcendence degree of $L$ over $k$ is one, we see that $M$ is algebraic over $k$, i.e., $M=k$. The corollary now follows by L\"uroth's Theorem.
\end{proof}

We conclude by asking if the analogue of {\it Thm.~\ref{main}} holds for Laurent polynomial rings: Let $k$ be a field, and let $R$ be an affine $k$-algebra.
Suppose that there exists a field $K$ with
$R\subset K^{[\pm 1]}$ and $R\not\subset K$. Does it follow that there exists a field $F$ and an algebraic extension $R\subset F^{[\pm 1]}$?


\bigskip

\noindent \address{Department of Mathematics\\
Western Michigan University\\
Kalamazoo, Michigan 49008}\\
\email{gene.freudenburg@wmich.edu}
\bigskip

\end{document}